\theoremstyle{definition}
\theoremstyle{plain}
\newtheorem{theorem}{Theorem}
\newtheorem{lemma}{Lemma}
\title{\LARGE Nonsmooth rank-one symmetric matrix factorization landscape}
\begin{document}

\author{\large C\'edric Josz\thanks{\url{cj2638@columbia.edu}, IEOR, Columbia University, New York.} \and Lexiao Lai\thanks{\url{lai.lexiao@hku.hk}, Department of Mathematics, The University of Hong Kong, Hong Kong.}}
\date{}

\maketitle
\vspace*{-5mm}
\begin{center}
    \textbf{Abstract}
    \end{center}
    \vspace*{-4mm}
 \begin{adjustwidth}{0.2in}{0.2in}
~~~~ Nonsmooth rank-one symmetric matrix factorization has no spurious second-order stationary points.
\end{adjustwidth} 

\section{Introduction}
\label{intro}

The object of this note is to prove the following result.

\begin{theorem}
\label{thm:l1_rank1_symmetric_mf}
For all $u \in \mathbb{R}^n$, the function
\begin{equation*}
\label{eq:objective}
\begin{array}{rccc}
     f : & \mathbb{R}^n & \longrightarrow & \mathbb{R}  \\
         & x & \longmapsto & \frac{1}{2} \sum\limits_{i,j=1}^n |x_ix_j-u_iu_j|
\end{array} 
\end{equation*}
has no spurious second-order stationary points.
\end{theorem}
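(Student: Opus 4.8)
Throughout write $g_{ij}(x)=x_ix_j-u_iu_j$, so that $\nabla g_{ij}(x)=x_je_i+x_ie_j$, and note that the minimum value of $f$ is $0$, attained exactly on $\{x:xx^\top=uu^\top\}=\{\pm u\}$ (or $\{0\}$ when $u=0$). I will prove the contrapositive: every Clarke-stationary point $\bar x$ with $f(\bar x)>0$ carries a second-order descent arc, hence is not second-order stationary. Two reformulations guide the argument. First, by $\ell_1$--$\ell_\infty$ duality, $f(x)=\max\{\tfrac12(x^\top Yx-u^\top Yu):Y=Y^\top,\ \max_{i,j}|Y_{ij}|\le1\}$, a maximum of quadratics $q_Y$ each vanishing at $\pm u$; correspondingly the Clarke subdifferential is $\partial f(x)=\{Sx:S=S^\top,\ \max_{i,j}|S_{ij}|\le1,\ S_{ij}=\operatorname{sign}(g_{ij}(x))\text{ whenever }g_{ij}(x)\neq0\}$. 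Thus $\bar x$ is stationary iff some admissible sign matrix $S$ satisfies $S\bar x=0$. Second, since a second-order descent arc confined to a subspace through $\bar x$ is still one for $f$ on $\mathbb{R}^n$, I may search for the arc inside the plane $\operatorname{span}(\bar x,u)$; the degenerate case $u\in\mathbb{R}\bar x$ (including $\bar x=0$) is handled directly.

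The single algebraic fact driving everything comes from $0=\bar x^\top S\bar x$: substituting $\bar x_i\bar x_j=g_{ij}(\bar x)+u_iu_j$ and using $\sum_{i,j}S_{ij}g_{ij}(\bar x)=\sum_{i,j}|g_{ij}(\bar x)|=2f(\bar x)$ gives $u^\top Su=-2f(\bar x)<0$. Hence $u$ is a direction of strictly negative curvature for the quadratic $q_S$, and this is precisely the curvature I will transfer to $f$. To do so I consider tangent directions $d=u+\rho\bar x$, $\rho\in\mathbb{R}$. Along $t\mapsto\bar x+td$ the rank-one lift expands as $(\bar x+td)(\bar x+td)^\top=\bar x\bar x^\top+t(\bar xd^\top+d\bar x^\top)+t^2dd^\top$: its first-order part is annihilated by $S$ (since $\langle S,\bar xd^\top+d\bar x^\top\rangle=2\langle S\bar x,d\rangle=0$), while its second-order part contributes the \emph{$\rho$-independent} curvature $\langle S,dd^\top\rangle=d^\top Sd=u^\top Su=-2f(\bar x)$.

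The program is then to choose $\rho$, an admissible $S$ with $S\bar x=0$, and a parabolic correction $w$ (replacing the line by $\gamma(t)=\bar x+td+\tfrac{t^2}2w$) so that the nonsmooth contributions indexed by the kink set $Z=\{(i,j):g_{ij}(\bar x)=0\}$ do not destroy this negative curvature. Writing $L_{ij}(d)=\langle\nabla g_{ij}(\bar x),d\rangle$, one needs the first-order directional derivative $\tfrac12\sum_{Z}\big(|L_{ij}(d)|-S_{ij}L_{ij}(d)\big)$ to vanish (criticality of $d$), after which the only surviving nonsmooth second-order terms are those over $(i,j)\in Z$ with $L_{ij}(d)=0$, each proportional to $|d_id_j+\tfrac12 L_{ij}(w)|$; these are to be cancelled by the choice of $w$. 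When this succeeds, every remaining second-order term is the smooth $\langle S,\cdot\rangle$ one, and the net second-order rate equals $u^\top Su=-2f(\bar x)<0$, contradicting second-order stationarity.

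I expect the management of $Z$ to be the main obstacle, and to be exactly where the rank-one structure must be used. Two identities should make it tractable after the reduction to $\operatorname{span}(\bar x,u)$: setting $c_k=\bar x_k+\sqrt{-1}\,u_k$, one has $c_ic_j=g_{ij}(\bar x)+\sqrt{-1}\,B_{ij}$ with $B_{ij}=\bar x_iu_j+\bar x_ju_i$, so $(i,j)\in Z$ precisely when $c_ic_j$ is purely imaginary; and if in addition $B_{ij}=0$ then $c_ic_j=0$, forcing $u_iu_j=0$ and thereby killing the corresponding second-order term. The remaining pairs, where $B_{ij}\neq0$, carry the first-order kinks that must be removed by the tangent $d=u+\rho\bar x$ together with matching $S_{ij}=\operatorname{sign}(B_{ij})$ on $Z$; verifying that such a choice is simultaneously feasible with $S\bar x=0$---equivalently, that the first-order directional derivative can be driven to zero---is the crux. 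I anticipate resolving it through the planar geometry of the points $(\bar x_k,u_k)$ and the solvability of the convex, piecewise-linear subproblem that selects $\rho$ and $w$.
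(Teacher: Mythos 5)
Your proposal is a program rather than a proof: the step you yourself identify as ``the crux'' --- showing that one can simultaneously pick $\rho$, an admissible $S$ with $S\bar x=0$, and a correction $w$ so that the first-order kink terms $\frac12\sum_{Z}(|L_{ij}(d)|-S_{ij}L_{ij}(d))$ vanish and the residual second-order kink terms are cancelled --- is left entirely unresolved, and it is precisely where all the difficulty of the theorem lives. Your opening computation is correct and genuinely illuminating ($\bar x^\top S\bar x=0$ together with $S_{ij}=\operatorname{sign}(g_{ij}(\bar x))$ off the kink set forces $u^\top Su=-2f(\bar x)<0$, which matches the second subderivative bound the paper ultimately obtains), but negative curvature of one active quadratic $q_S$ does not transfer to $f$ without controlling the kinks, and nothing in the proposal establishes that control. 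There is also a smaller loose end: when $c_ic_j=0$ you conclude the second-order term dies, but if, say, $c_i=0$ (so $\bar x_i=u_i=0$) the term $|d_id_j+\tfrac12L_{ij}(w)|$ still contains $\tfrac12|\bar x_jw_i|$, so you must additionally constrain $w_i=0$ on the support complement of $u$.

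The missing ingredient is a characterization of the first-order stationary points, which is the bulk of the paper's work. The paper first shows (via a careful analysis of the one-dimensional subdifferential $\partial\alpha(t)=\langle\operatorname{sign}(xt-u),x\rangle$, an increasing step function whose roots are the ratios $x_i/u_i$ and whose jump points are the reciprocals $u_i/x_i$) that every non-global stationary point satisfies $\langle\operatorname{sign}(u),x\rangle=0$ and $|x_i|\leqslant|u_i|$ for all $i$, with $x_i=0$ whenever $u_i=0$. With that in hand, no parabolic correction and no case analysis over $Z$ are needed: for $w=\theta u-x$ one checks that $(x_i+tw_i)(x_j+tw_j)/(u_iu_j)\leqslant 1$ for all small $t>0$, so every absolute value resolves with the fixed sign $-\operatorname{sign}(u_iu_j)$ and $f(x+tw)=(1-t^2)f(x)$ \emph{exactly}, giving $d^2f(x|0)(w)\leqslant-2f(x)$. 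In other words, the straight line toward $\pm u$ (your $d=u+\rho\bar x$ with $\rho=-1$, up to the sign $\theta$) already works, but only because the stationarity constraints pin down the geometry of $\bar x$ relative to $u$; without first proving that characterization, the ``convex, piecewise-linear subproblem'' you defer to has no evident solution, and your argument does not close.
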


Spurious means not a global minimum. The analysis of the landscape of $f$ was initiated six years ago \cite{joszneurips2018} as a limiting case of $\ell_p$ rank-one symmetric matrix factorization. This enabled one to show that $f$ has no spurious strict local minima \cite[Proposition 3.1]{joszneurips2018}. Soon after, its global minima were shown to be sharp \cite[Theorem 8.6]{charisopoulos2021low}. It has regained interest recently \cite{guan2024ell_1}, in particular because its saddle points are not strict \cite{davis2022proximal}. 

In spite of the above works, it remains unclear whether Theorem \ref{thm:l1_rank1_symmetric_mf} is true. While it was claimed that $f$ has no spurious local minima \cite[Proposition 1.1]{joszneurips2018}, the proof is invalid, despite providing useful insights (for e.g., the staircase function in \cite[Lemma 3.1]{joszneurips2018}). Similarly, the proof of Theorem \ref{thm:l1_rank1_symmetric_mf} proposed in \cite[Corollary 3.11]{guan2024ell_1} is incomplete, with one of the cases described as too complicated and tedious to be presentable. We thus take this opportunity to prove Theorem \ref{thm:l1_rank1_symmetric_mf} using arguments developed to analyze $\ell_1$ rank-one matrix rectangular factorization \cite{josz2022nonsmooth}.

\section{Proof of Theorem \ref{thm:l1_rank1_symmetric_mf}}

We adhere to the standard notations of Rockafellar and Wets \cite[Chapter 10]{rockafellar2009variational}. Since $f$ is the composition of convex and smooth functions, by the basic chain rule \cite[Theorem 10.6]{rockafellar2009variational} it is regular and
\begin{equation}
\label{eq:subdif}
\partial f(x) = \left\{ \Lambda x ~:~ \Lambda \in \mathrm{sign}(xx^T-uu^T),~ \Lambda^T = \Lambda \right\} 
\end{equation}
where $\mathrm{sign}(t) := t/|t|$ if $t\neq 0$, otherwise $\mathrm{sign}(t) := \left[-1,1\right]$. The partial subdifferential \cite[Corollary 10.11]{rockafellar2009variational} is given by
\begin{subequations}
        \begin{align}
            \partial_{x_i} f(x) & = \sum_{j=1}^n \mathrm{sign}(x_ix_j-u_iu_j)x_j \\
        & = \left\{
    \begin{array}{cc}
     \sum\limits_{j=1}^n \mathrm{sign}(u_i) \mathrm{sign}(x_j(x_i/u_i)-u_j)x_j  & \text{if}~ u_i\neq 0, \\[1mm]
        \mathrm{sign}(x_i) \sum\limits_{j=1}^n |x_j|  & \text{if}~ u_i = 0, 
    \end{array} \right. \\
        & = \left\{
    \begin{array}{cc}
        \mathrm{sign}(u_i) \partial \alpha (x_i/u_i)  & \text{if}~ u_i\neq 0, \\[1mm]
        \mathrm{sign}(x_i) |x|_1  & \text{if}~ u_i = 0, 
    \end{array} \right. \label{eq:partial_sub}
        \end{align}
\end{subequations}
where $|\cdot|_1$ is the $\ell_1$-norm,
\begin{equation}
\label{eq:sub_alpha}
\begin{array}{rccc}
     \alpha : & \mathbb{R} & \longrightarrow & \mathbb{R}  \\
         & t & \longmapsto & \sum\limits_{i=1}^n |x_i t-u_i|
\end{array} ~~~~~ \text{and} ~~~~~ \partial \alpha(t) = \sum_{i=1}^n  \mathrm{sign}(x_it-u_i)x_i.
\end{equation}         
More succinctly, $\alpha(t) = |xt-u|_1$ and $\partial \alpha(t) = \langle \mathrm{sign}(xt-u),x\rangle$ where $\langle \cdot,\cdot\rangle$ is the dot product. Since $\alpha$ is convex and piecewise affine, its subdifferential $\partial \alpha$ is an increasing step function. From the expression of $\partial \alpha(t)$ in \eqref{eq:sub_alpha}, it follows that the jumps between the steps of $\partial \alpha$ occur at $u_i/x_i$ for all $x_i\neq 0$. We call those points the jump points of $\partial \alpha$. Theorem \ref{thm:l1_rank1_symmetric_mf} is proven using three lemmas.

\begin{lemma}
\label{lemma:step}
\normalfont
$0 \in \partial f(x) ~~~ \Longrightarrow ~~~ f(x) = 0 ~~\text{or}~~ 0 \in \partial \alpha(0)$.
\end{lemma}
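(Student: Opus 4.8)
The plan is to prove the statement in the equivalent form: assuming $0 \in \partial f(x)$ and $0 \notin \partial \alpha(0)$, I will show $f(x) = 0$. I would first record the first-order condition as $\Lambda x = 0$ for some symmetric $\Lambda$ with $\Lambda_{ij} \in \mathrm{sign}(x_ix_j - u_iu_j)$, and extract its consequences at two levels. At the coordinate level, the $i$-th entry of $\Lambda x = 0$ exhibits $0$ as a member of $\partial_{x_i} f(x)$, so by \eqref{eq:partial_sub} every index with $u_i \neq 0$ satisfies $x_i/u_i \in M := \arg\min \alpha$, while every index with $u_i = 0$ forces $x_i = 0$ (the degenerate case $x = 0$ gives $\partial\alpha(0) = \{0\}$ at once). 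Since $0 \notin M$, no coordinate can have $x_i = 0$ while $u_i \neq 0$, so the supports of $x$ and $u$ coincide.

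Next I would exploit the weighted-median structure of $\alpha$. On the common support, $\alpha(t) = \sum_i |x_i|\,|t - u_i/x_i|$, so $M = [a,b]$ is a weighted-median interval whose endpoints are jump points $u_i/x_i$; after replacing $x$ by $-x$ if necessary I may assume $0 < a \leq b$. The reciprocal relation — $x_i/u_i \in [a,b]$ is equivalent to $u_i/x_i \in [1/b, 1/a]$ — together with the fact that $a$ and $b$ are themselves of the form $u_i/x_i$ forces $ab = 1$ and places every jump point in $\{a, b\}$ (none can lie in the open interval $(a,b)$ where $\alpha$ is flat). If $a = b$, then $ab = 1$ gives $a = b = 1$, hence $x = u$ and $f(x) = 0$. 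The remaining case is $a < 1 < b = 1/a$, and I partition the support into $S = \{i : x_i = u_i/a\}$ and $T = \{i : x_i = a u_i\}$, both nonempty.

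The crux — and the step I expect to be the main obstacle — is that the coordinate conditions above are met by many non-minimizing configurations (the partial subdifferentials decouple), so the contradiction must come from the symmetry $\Lambda = \Lambda^T$. Computing signs on the blocks gives $\mathrm{sign}(x_ix_j - u_iu_j) = \mathrm{sign}(u_iu_j)$ on $S \times S$, $= -\mathrm{sign}(u_iu_j)$ on $T \times T$, and $x_ix_j - u_iu_j = 0$ on $S \times T$ (so those entries are free in $[-1,1]$). Writing $(\Lambda x)_i = 0$ separately for $i \in S$ and for $i \in T$, the within-block entries are determined while the cross-block entries are bounded in absolute value by $\sum |u_j|$; this yields the two inequalities $a^{-2}\sum_{S}|u_j| \leq \sum_T |u_j|$ and $a^2\sum_T|u_j| \leq \sum_S|u_j|$. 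Since $a < 1$, these can hold only as equalities, which in turn pins the free cross entries to their extreme values: $\Lambda_{ij} = -\mathrm{sign}(u_iu_j)$ read from the $S$-rows but $\Lambda_{ij} = +\mathrm{sign}(u_iu_j)$ read from the $T$-rows. As $u_i, u_j \neq 0$ on the support, $\Lambda_{ij} = \Lambda_{ji}$ becomes impossible, ruling out the case $a < b$ and leaving only $f(x) = 0$. The delicate points to get right are the tightness analysis that forces the free cross entries to the boundary, and the careful bookkeeping of signs across the two blocks.
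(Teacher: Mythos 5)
Your proof is correct, and up to the point where every ratio $x_i/u_i$ is shown to take one of two reciprocal values it follows essentially the same path as the paper: the paper phrases this as a case analysis on the increasing step function $\partial \alpha$ (roots $x_i/u_i$ versus jump points $u_i/x_i$), whereas your packaging via the argmin interval $M=[a,b]$ and the identity $ab=1$ absorbs several of the paper's sub-cases (e.g.\ the case $\partial\alpha(x_{i_0}/u_{i_0})\subset(-\infty,0]$, which the paper rules out separately by showing it forces $x=0$) into one clean reciprocity argument; you also fold the reduction to $\mathrm{supp}(x)=\mathrm{supp}(u)$ in at the start rather than as a closing step. Where you genuinely diverge is the final contradiction. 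The paper builds a test vector $h$ ($h_i=-u_i\mu$ on one block, $h_i=u_i/\mu$ on the other) and the functional $\gamma(Q)=h^TQx+x^TQh$: symmetry and $\Lambda x=0$ give $\gamma(\Lambda)=0$, while a direct computation shows $\gamma$ is a single strictly negative number on all of $\mathrm{sign}(xx^T-uu^T)$, because the coefficients $h_ix_j+x_ih_j$ vanish identically on the cross block and the undetermined entries never enter. You instead read $\Lambda x=0$ row by row, bound the cross-block contribution, obtain the two opposing inequalities $a^{-2}\sum_{j\in S}|u_j|\leqslant \sum_{j\in T}|u_j|$ and $a^{2}\sum_{j\in T}|u_j|\leqslant \sum_{j\in S}|u_j|$ (these force equality simply by combining them, independently of $a<1$, so your stated reason is slightly off but the conclusion stands), and then use tightness of the triangle inequality to pin the free cross entries to $-\mathrm{sign}(u_iu_j)$ from the $S$-rows and $+\mathrm{sign}(u_iu_j)$ from the $T$-rows, contradicting $\Lambda=\Lambda^T$. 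Both arguments extract the contradiction from the same two facts ($\Lambda x=0$ and symmetry); yours is more elementary in that it needs no auxiliary direction $h$, at the price of an equality-case analysis, while the paper's choice of $h$ makes the cross block irrelevant and yields the contradiction in one line.
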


\begin{proof}
Consider the special case where $u_i \neq 0$ for all $i$. We reason by contradiction: assume that $0 \in \partial f(x)$, $f(x)>0$ and $0 \notin \partial \alpha(0)$. By \cite[Corollary 10.11]{rockafellar2009variational}, we have $0 \in \partial f(x) \subset \partial_{x_1} f(x) \times \cdots \times \partial_{x_n} f(x)$. From the expression of $\partial_{x_i} f(x)$ in \eqref{eq:partial_sub}, we see $0 \in \partial \alpha (x_i/u_i)$ for all $i$. In other words, the ratios $x_i/u_i$ are roots of $\partial \alpha$. Since $0 \notin \partial \alpha(0)$, we have $x_i \neq 0$ for all $i$. If the increasing function $\partial \alpha$ has a positive and a negative root, then $0 \in \partial \alpha (0)$, a contradiction. Thus, without loss of generality, we may assume that $0 <x_1/u_1\leqslant x_2/u_2 \leqslant \cdots \leqslant x_m/u_m$. If $\partial \alpha$ has no positive jump point that is less than or equal to  $x_1/u_1$, then $0 \in \partial \alpha (0)$, a contradiction. Thus let $u_{i_0}/x_{i_0}$ be such a jump point. As prescribed, we have $0<u_{i_0}/x_{i_0}\leqslant x_i/u_i$ for all $i$, as illustrated in Figure \ref{proof_a}. Taking the inverse yields $0 < u_i/x_i \leqslant x_{i_0}/u_{i_0}$, that is to say, all the jump points of $\partial \alpha$ are less than or equal to one of its roots. This is illustrated in Figure \ref{proof_b}.

\begin{figure}[ht!]
\centering
\begin{subfigure}{.49\textwidth}
  \centering
  \begin{tikzpicture}[scale=0.8]
\draw[->] (-1,2)--(5,2) node[right]{$t$};
\draw[->] (0,-.5)--(0,4.5) node[above]{$\partial \alpha(t)$};
\draw[blue,thick] (-1,0)--(1.2675,0) ;
\draw[blue,thick] (1.25,0)--(1.25,2) ;
\draw[blue,thick] (1.2325,2)--(4.0175,2) ;
\draw[blue,thick] (4,2)--(4,4);
\draw[blue,thick] (3.9825,4)--(5,4) ;
\draw (1.25,2.47) node {$\frac{u_{i_0}}{x_{i_0}}$};
\draw (2.2,1.9)--(2.2,2.1);
\draw (2.2,2.47) node {$\frac{x_1}{u_1}$};
\draw (2.85,2.35);
\draw (2.85,2.47) node {$\hdots$};
\draw (3.5,1.9)--(3.5,2.1);
\draw (3.5,2.47) node {$\frac{x_m}{u_m}$};
\end{tikzpicture}
  \caption{Roots of $\partial \alpha$}
  \label{proof_a}
\end{subfigure}
\begin{subfigure}{.49\textwidth}
  \centering
   \begin{tikzpicture}[scale=0.8]
\draw[->] (-1,2)--(5,2) node[right]{$t$};
\draw[->] (0,-.5)--(0,4.5) node[above]{$\partial \alpha(t)$};
\draw[blue,thick] (-1,-.2)--(1.0175,-.2) ;
\draw[blue,thick] (1,-.2)--(1,0.4) ;
\draw[blue,thick] (0.9825,0.4)--(1.7675,0.4) ;
\draw[blue,thick] (1.75,0.4)--(1.75,.8) ;
\draw[blue,thick] (1.7325,.8)--(2.3175,.8) ;
\draw[blue,thick] (2.3,.8)--(2.3,1.3) ;
\draw[blue,thick] (2.2825,1.3)--(2.6175,1.3) ;
\draw[blue,thick] (2.6,1.3)--(2.6,2) ;
\draw[blue,thick] (2.5825,2)--(4.9825,2) ;
\draw (3.65,1.9)--(3.65,2.1);
\draw (3.65,2.47)  node {$\frac{x_{i_0}}{u_{i_0}}$};
\draw (1,1.9)--(1,2.1);
\draw (1,2.47) node {$\frac{u_m}{x_m}$};
\draw (1.8,2.47) node {$\hdots$};
\draw (2.6,2.47) node {$\frac{u_1}{x_1}$};
\end{tikzpicture}
  \caption{Jump points of $\partial \alpha$}
  \label{proof_b}
\end{subfigure}
\caption{Subdifferential of $\alpha$}
\vspace*{-3mm}
\end{figure}

Consider the case that $\partial \alpha (x_{i_0}/u_{i_0}) \subset (-\infty,0]$ (see Figure \protect\ref{proof_b}). Since $x_{i_0}/u_{i_0}$ is greater than or equal to all of the jump points of $\partial \alpha$, we have $\partial \alpha(t) = 0$ for all $t > x_{i_0}/u_{i_0}$.  Hence for all $t$ large enough we have $\mathrm{sign}(x_it-u_i) = \mathrm{sign}(x_it)$ and $\partial \alpha (t) = \sum_{i=1}^n \mathrm{sign}(x_it)x_i = \sum_{i=1}^n |x_i|$. It follows that $x=0$, a contradiction.

As a result, there exists $\epsilon>0$ such that $[0,\epsilon] \subset \partial \alpha (x_{i_0}/u_{i_0})$, and in particular, $x_{i_0}/u_{i_0}$ is a jump point of $\partial \alpha$. As all the jump points of $\partial \alpha$ (i.e. $u_i/x_i$'s) are less than or equal to $x_{i_0}/u_{i_0}$, we have $x_{i_0}/u_{i_0} = u_1/x_1$, which is the largest jump point. In addition, since $\partial \alpha$ is an increasing step function, it has no roots greater than $x_{i_0}/u_{i_0}$. Therefore, $x_{i_0}/u_{i_0} = x_{m}/u_{m}$.

We next consider the case where $[-\epsilon,\epsilon] \subset \partial \alpha (x_{i_0}/u_{i_0}) = \partial \alpha (x_{m}/u_{m})$ (see Figure \ref{proof_c}), after possibly reducing $\epsilon>0$. Since $\partial \alpha$ is a increasing step function, it has no roots less than $x_{m}/u_{m}$. It follows that $x_i/u_i = x_{m}/u_{m}>0$ for all $i$. As $u_1/x_1 = x_{i_0}/u_{i_0} = x_m/u_m$, we have $x_1/u_1 = \cdots = x_m/u_m = 1$. Hence $f(x) = 0$, a contradiction.
\begin{figure}[ht!]
\centering
\begin{subfigure}{.49\textwidth}
  \centering
  \begin{tikzpicture}[scale=0.8]
\draw[->] (-1,2)--(5,2) node[right]{$t$};
\draw[->] (0,-.5)--(0,4.5) node[above]{$\partial \alpha(t)$};
\draw[blue,thick] (-1,0)--(2.2,0) ;
\draw[blue,thick] (2.2,0)--(2.2,4) ;
\draw[blue,thick] (2.2,4)--(5,4);
\draw (1.9,2.4) node {$\frac{x_{i_0}}{u_{i_0}}$};
\end{tikzpicture}
  \caption{$[-\epsilon,\epsilon]\subset \partial \alpha(x_{i_0}/u_{i_0})$}
  \label{proof_c}
\end{subfigure}
\begin{subfigure}{.49\textwidth}
  \centering
   \begin{tikzpicture}[scale=0.8]
\draw[->] (-1,2)--(5,2) node[right]{$t$};
\draw[->] (0,-.5)--(0,4.5) node[above]{$\partial \alpha(t)$};
\draw[blue,thick] (-1,0)--(1.2675,0) ;
\draw[blue,thick] (1.25,0)--(1.25,2) ;
\draw[blue,thick] (1.2325,2)--(4.0175,2) ;
\draw[blue,thick] (4,2)--(4,4);
\draw[blue,thick] (3.9825,4)--(5,4) ;
\draw (1.25,2.47)  node {$\frac{x_1}{u_1}$};
\draw (3.5,2.47) node {$\frac{x_{i_0}}{u_{i_0}}$};
\end{tikzpicture}
  \caption{$\partial \alpha(x_{i_0}/u_{i_0})= [0,\epsilon]$}
  \label{proof_d}
\end{subfigure}
\caption{Visualization of the cases where $[0,\epsilon] \subset \partial \alpha(x_{i_0}/u_{i_0})$.}
\vspace*{-3mm}
\end{figure}
It remains to consider the case where $\partial \alpha (x_{m}/u_{m}) =\partial \alpha (x_{i_0}/u_{i_0}) = [0,\epsilon]$ (see Figure \ref{proof_d}), possibly after increasing $\epsilon>0$. If $x_{m}/u_{m}\leqslant 1$, then all the jump points $u_i/x_i \geqslant u_{m}/x_{m} \geqslant 1$. This implies that $0$ is a root of $\partial \alpha$, which is a contradiction. Thus $x_{m}/u_{m}> 1 > u_m/x_m = x_1/u_1$. We next prove that $x_i/u_i = x_m/u_m=:\mu$ or $x_i/u_i = x_1/u_1 = 1/\mu$ for every $i$. Assume the contrary that $x_1/u_1<x_i/u_i<x_m/u_m$ for some $i$, then inverting yields that $u_m/x_m<u_i/x_i<u_1/x_1$. This is impossible as $u_m/x_m$ and $u_1/x_1$ are roots (they equal to $x_1/u_m$ and $x_m/u_m$ respectively), and $u_i/x_i$ is a jump point of $\partial \alpha$.  Given $x_i / u_i = \mu$ or $x_i /u_i = 1/\mu$ for every $i$, let $h \in \mathbb{R}^n$ be such that
\begin{equation}
\label{h}
    h_i := \left\{
    \begin{array}{cl}
        -u_i \mu & \text{if}~ x_i/u_i = \mu, \\
        \hphantom{-}u_i/\mu & \text{if}~ x_i/u_i = 1/\mu.
    \end{array}
    \right.
\end{equation}
Consider the function $\gamma:\mathbb{R}^{n\times n}\rightarrow \mathbb{R}$ defined by $\gamma(Q) := h^T Q x + x^T Q h$. Since $0 \in \partial f(x)$, there exists $\Lambda \in \mathrm{sign}(xx^T-uu^T)$ such that $\Lambda x = 0$ and $\Lambda^T = \Lambda$. Thus $\gamma(\Lambda) = h^T \Lambda x + x^T \Lambda h = h^T (\Lambda x) + (\Lambda^T x)^T h = 0$. Yet, observe that 
\begin{subequations}
        \begin{align}
       \gamma\left(\mathrm{sign}(xx^T-uu^T)\right) = & \sum_{i=1}^n \sum_{j=1}^n \mathrm{sign}(x_ix_j-u_iu_j)(h_ix_j + x_ih_j) \label{gamma-1} ~~~~~~~~~~~~~~~~~ \\
        = & \sum\limits_{\frac{x_i}{u_i} = \mu} \sum\limits_{\frac{x_j}{u_j} = \mu} \mathrm{sign}(x_ix_j-u_iu_j)(-u_i \mu \times x_j - x_i\times u_j \mu) ~ + \label{hb}\\
        & \sum\limits_{\frac{x_i}{u_i} = \mu} \sum\limits_{\frac{x_j}{u_j} = \frac{1}{\mu}} \mathrm{sign}(x_ix_j-u_iu_j)(-u_i \mu \times x_j + x_i \times u_j/\mu) ~ + \label{hc}\\
        & \sum\limits_{\frac{x_i}{u_i} =\frac{1}{\mu}} \sum\limits_{\frac{x_j}{u_j} = \mu} \mathrm{sign}(x_ix_j-u_iu_j)(u_i/\mu \times x_j - x_i \times u_j\mu) ~ + \label{hd}\\
        & \sum\limits_{\frac{x_i}{u_i} =\frac{1}{\mu}} \sum\limits_{\frac{x_j}{u_j} =\frac{1}{\mu}} \mathrm{sign}(x_ix_j-u_iu_j)(u_i/\mu \times x_j + x_i \times u_j/\mu)\label{he} \\
        = &-\sum\limits_{\frac{x_i}{u_i} = \mu} \sum\limits_{\frac{x_j}{u_j} = \mu} 2\mu^2~\mathrm{sign}((\mu^2-1)u_iu_j)u_iu_j ~ + \label{xy1-1} \\
        & \sum\limits_{\frac{x_i}{u_i} =\frac{1}{\mu}} \sum\limits_{\frac{x_j}{u_j} =\frac{1}{\mu}}2/\mu^2~\mathrm{sign}((1/\mu^2 - 1)u_iu_j)u_iu_j \label{xy2-1}\\
        = & - 2\mu^2 \sum\limits_{\frac{x_i}{u_i} = \mu} \sum\limits_{\frac{x_j}{u_j} = \mu} |u_i u_j| -2/\mu^2 \sum\limits_{\frac{x_i}{u_i} =\frac{1}{\mu}} \sum\limits_{\frac{x_j}{u_j} = \frac{1}{\mu}} |u_iu_j| ~<~ 0. \label{final-1}
        \end{align}
\end{subequations}
Above, \eqref{gamma-1} follows from the definition of $\gamma$. We substitute $h_i$ using its definition in \eqref{h}, which yields \eqref{hb}-\eqref{he}. We next substitute $x_i$ and $x_j$ using their expressions below the summation signs and obtain \eqref{xy1-1}-\eqref{xy2-1}. Two of the four terms cancel out: \eqref{hb} yields \eqref{xy1-1};  \eqref{hc} cancels out because $-u_i \mu \times x_j + x_i \times u_j/\mu = -u_i \mu \times u_j/\mu + u_i \mu \times u_j/\mu = 0$; \eqref{hd} cancels out because $u_i/\mu \times x_j - x_i \times u_j\mu = u_i/\mu \times u_j \mu - u_i/\mu \times u_j\mu = 0$; \eqref{he} yields \eqref{xy2-1}. To get from \eqref{xy1-1}-\eqref{xy2-1} to \eqref{final-1}, we use the fact that $\mu = x_{m}/u_{m}>1$. We also use the fact that $\mathrm{sign}(u_iu_j)u_iu_j = |u_iu_j|$. The result in \eqref{final-1} is negative because the summation takes place over nonempty sets: $x_{m}/u_{m} = \mu$ and $x_{1}/u_{1} = 1/\mu$. In particular, $\gamma(\Lambda) < 0$ whereas we had shown above that $\gamma(\Lambda) = 0$, a contradiction. 

Consider the general case where $u_i \in \mathbb{R}$ for all $i$. Assume that $0 \in \partial f(x)$. From the expression of $\partial_{x_i} f(x)$ in \eqref{eq:partial_sub}, it follows that $x_i = 0$ if $u_i = 0$. From the expression of $\partial f$ in \eqref{eq:subdif}, there exists $\Lambda \in \mathrm{sign}(xx^T-uu^T)$ such that $\Lambda^T = \Lambda$ and $\Lambda x = 0$. Let $\bar{\Lambda} = (\Lambda_{ij})_{u_iu_j\neq 0}$, $\bar{x} = (x_i)_{u_i \neq 0}$ and $\bar{u} = (u_i)_{u_i \neq 0}$. We have $\bar{\Lambda} \in \mathrm{sign}(\bar{x}\bar{x}^T-\bar{u}\bar{u}^T)$, $\bar{\Lambda}^T = \bar{\Lambda}$ and $\bar{\Lambda} \bar{x} = 0$. Thus $0 \in \partial \bar{f}(\bar{x})$ where $\bar{f}(\bar{y}) = |\bar{y}\bar{y}^T-\bar{u}\bar{u}^T|_1$ for all $\bar{y} \in \mathbb{R}^{|u_i|_0}$ where $|\cdot|_0$ is the $\ell_0$-norm. By the special case, we have $0 = \bar{f}(\bar{x}) = |xx^T-uu^T|_1 = f(x)$ or $0 \in \partial \bar{\alpha}(0) = \partial \alpha (0)$ where $\bar{\alpha}(t) = |\bar{x}t - \bar{u}|_1 = |xt - u|_1 = \alpha(t)$ for all $t\in \mathbb{R}$.
\end{proof}
The next lemma characterize the set of first-order stationary points of $f$.
\begin{lemma}
\label{lemma:stationary}
$(\partial f)^{-1}(0) = \{ x \in \mathbb{R}^n : \langle \mathrm{sign}(u),x\rangle  = 0 , ~|x_i| \leqslant |u_i| , ~ i =1 , \hdots, n \} \cup \{\pm u\}$.
\end{lemma}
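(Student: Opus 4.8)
The plan is to prove the two inclusions separately. For the inclusion $\supseteq$, I would first note that $\pm u$ are global minima of $f$ (since $(\pm u)(\pm u)^T = uu^T$ gives $f(\pm u)=0$), hence first-order stationary. For a point $x$ in the first set, the constraints $|x_i|\le|u_i|$ force $x_i=0$ whenever $u_i=0$, so that $\langle\mathrm{sign}(u),x\rangle=\sum_{u_i\neq0}\mathrm{sign}(u_i)x_i$ is single-valued and equal to $0$. I would then exhibit an explicit symmetric selection $\Lambda\in\mathrm{sign}(xx^T-uu^T)$ with $\Lambda x=0$: set $\Lambda_{ij}:=-\mathrm{sign}(u_i)\mathrm{sign}(u_j)$ when $u_iu_j\neq0$ and $\Lambda_{ij}:=0$ otherwise. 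One checks $\Lambda_{ij}\in\mathrm{sign}(x_ix_j-u_iu_j)$ using $|x_ix_j|\le|u_iu_j|$ (so that the difference has sign $-\mathrm{sign}(u_iu_j)$ unless it vanishes, in which case any value is admissible), and $(\Lambda x)_i=-\mathrm{sign}(u_i)\sum_{u_j\neq0}\mathrm{sign}(u_j)x_j=0$ by the first constraint. Hence $0\in\partial f(x)$ by \eqref{eq:subdif}.

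For the inclusion $\subseteq$, suppose $0\in\partial f(x)$. By Lemma \ref{lemma:step}, either $f(x)=0$ or $0\in\partial\alpha(0)$. If $f(x)=0$ then $xx^T=uu^T$, and rank-one symmetric factorization gives $x=\pm u$. Otherwise $0\in\partial\alpha(0)$, and I claim $x$ lies in the first set. Using $0\in\partial f(x)\subseteq\partial_{x_1}f(x)\times\cdots\times\partial_{x_n}f(x)$ together with \eqref{eq:partial_sub}: for each $i$ with $u_i=0$ one gets $x_i=0$ (or $x=0$, which already lies in the set), and for each $i$ with $u_i\neq0$ one gets $0\in\partial\alpha(x_i/u_i)$. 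Since $x_i=0$ whenever $u_i=0$, the point $0$ is not a jump point of $\partial\alpha$, so $\partial\alpha(0)$ is the single value $-\sum_{u_i\neq0}\mathrm{sign}(u_i)x_i$; thus $0\in\partial\alpha(0)$ yields $\langle\mathrm{sign}(u),x\rangle=0$.

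It remains to establish the bound $|x_i|\le|u_i|$, which I expect to be the main obstacle. Here I would argue as follows. For $u_i\neq0$ write $\rho_i:=x_i/u_i$; by the above, each $\rho_i$ and $0$ are minimizers of the convex function $\alpha$, whose minimizer set $M$ is therefore an interval on which $\alpha$ is constant. Recall from \eqref{eq:sub_alpha} that the jump points of $\partial\alpha$ are exactly the reciprocals $u_i/x_i=1/\rho_i$ (for $x_i\neq0$), and that $\alpha$ fails to be differentiable precisely at these points; consequently no jump point can lie in the interior of $M$. Now suppose $\rho_{i_0}>1$ for some $i_0$. Then $0,\rho_{i_0}\in M$ force $[0,\rho_{i_0}]\subseteq M$, hence $(0,\rho_{i_0})\subseteq\mathrm{int}(M)$; but $1/\rho_{i_0}\in(0,1)\subseteq(0,\rho_{i_0})$ is a jump point, a contradiction. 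The case $\rho_{i_0}<-1$ is symmetric, reversing the roles of $0$ and $\rho_{i_0}$.

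Thus $|\rho_{i_0}|\le1$, i.e. $|x_i|\le|u_i|$, which together with $\langle\mathrm{sign}(u),x\rangle=0$ and $x_i=0$ for $u_i=0$ places $x$ in the first set and completes the inclusion. I anticipate that the delicate point is this reciprocal interlacing between the roots $\rho_i$ and the jump points $1/\rho_i$ of $\partial\alpha$; it is, however, considerably lighter than the symmetric-matrix computation $\gamma(\Lambda)<0$ needed in Lemma \ref{lemma:step}, since it relies only on the one-dimensional restrictions $0\in\partial\alpha(\rho_i)$ rather than on a global symmetric selection.
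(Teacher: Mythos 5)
Your proposal is correct and follows essentially the same route as the paper: the converse inclusion uses the identical selection $\Lambda = -\mathrm{sgn}(u)\mathrm{sgn}(u)^T$, and the forward inclusion uses Lemma \ref{lemma:step} together with the observation that no jump point of $\partial\alpha$ can sit strictly between the root $0$ and a root $x_i/u_i$. Your phrasing of that last step via the minimizer interval $M$ of the convex function $\alpha$ (rather than directly via monotonicity of the step function $\partial\alpha$, as the paper does to get $x_ix_j/(u_iu_j)\leqslant 1$ for all pairs) is only a cosmetic repackaging and is sound.
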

\begin{proof} ($\Longrightarrow$)
Assume that $0 \in \partial f(x)$. If $f(x) = 0$, then $xx^T = uu^T$ and so $x = \pm u$. Otherwise, Lemma \ref{lemma:step} and the expression of $\partial \alpha (t)$ in \eqref{eq:sub_alpha} yield
\begin{equation*}
        0 \in \partial \alpha(0) = -\sum_{i=1}^n  \mathrm{sign}(u_i)x_i = -\sum_{u_i\neq 0} \mathrm{sign}(u_i)x_i - \sum_{u_i = 0} \mathrm{sign}(0)x_i.
\end{equation*}
Since $0 \in \partial_{x_i} f(x)$, by \eqref{eq:partial_sub} we have $x_i = 0$ if $u_i = 0$, and so the above equation yields $\langle \mathrm{sign}(u),x\rangle = 0$. Also, by \eqref{eq:partial_sub} we have $0 \in \partial \alpha (x_i/u_i)$ if $u_i\neq 0$. Observe that the increasing function $\partial \alpha$ cannot contain a jump point between the root $0$ and any root $x_i/u_i$. Hence, for all $x_j \neq 0$, if the jump point $u_j/x_j$ is positive, then it is greater than or equal to all the roots $x_i/u_i$, that is to say, $u_j/x_j \geqslant x_i/u_i$. If the jump point $u_j/x_j$ is negative, then it is less than or equal to all the roots $x_i/u_i$, that is to say, $u_j/x_j \leqslant x_i/u_i$. Multiplying both inequalities by $x_j/u_j$ yields $x_ix_j/(u_iu_j) \leqslant 1$ whenever $u_iu_j \neq 0$. In particular, $x_i^2 \leqslant u_i^2$ for all $u_i\neq 0$, and we already know that $x_i = 0$ if $u_i = 0$, so that $|x_i|\leqslant |u_i|$ for all $i$.

($\Longleftarrow$) If $x = \pm u$, then $x$ is a global minimum of $f$ and $0 \in \partial f(x)$ by Fermat's rule \cite[Theorem 10.1]{rockafellar2009variational}. Otherwise, define $\mathrm{sgn}(t) := t/|t|$ if $t\neq 0$, otherwise $\mathrm{sgn}(t) := 0$. Let $\Lambda := -\mathrm{sgn}(uu^T)$, which is symmetric. Since $|x_i| \leqslant |u_i|$ for all $i$, we have 
\begin{equation*}
    \Lambda x =  -\mathrm{sgn}(uu^T)x = -\mathrm{sgn}(u)\mathrm{sgn}(u^T)x =  -\mathrm{sgn}(u) \langle\mathrm{sgn}(u),x\rangle =  -\mathrm{sgn}(u) \langle\mathrm{sign}(u),x\rangle = 0.
\end{equation*}
It remains to show that $\Lambda \in \mathrm{sign}(xx^T-uu^T)$. If $u_iu_j=0$, then $x_ix_j = 0$. In that case, $-\mathrm{sgn}(u_iu_j) = 0$ and $\mathrm{sign}(x_ix_j-u_iu_j) = [-1,1]$. Hence $-\mathrm{sgn}(u_iu_j) \in \mathrm{sign}(x_ix_j-u_iu_j)$. If $u_iu_j \neq 0$, then $x_ix_j/(u_iu_j) \leqslant |x_ix_j/(u_iu_j)| = |x_i|/|u_i| \hspace*{.3mm} |x_j|/|u_j| \leqslant 1$. In that case, $\mathrm{sign}(x_ix_j-u_iu_j) = \mathrm{sign}(u_iu_j)\mathrm{sign}(x_ix_j/(u_iu_j)-1) \ni -\mathrm{sign}(u_iu_j)  = -\mathrm{sgn}(u_iu_j)$.
\end{proof}

Recall that the second subderivative of $f$ at $x$ for $v\in \mathbb{R}^n$ and $w\in \mathbb{R}^n$ is
\begin{equation*}
    d^2 f(x|v)(w):= \liminf_{\tau \searrow 0,w'\rightarrow w} \frac{f(x+\tau w') - f(x) - \tau \langle v,w'\rangle}{\frac{1}{2}\tau^2},
\end{equation*}
as defined in \protect\cite[Definition 13.3]{rockafellar2009variational}. We are now ready to prove our final lemma, which implies Theorem \ref{thm:l1_rank1_symmetric_mf}.
\begin{lemma}
\label{lemma:spurious}
\normalfont
$0 \in \partial f(x) ~~ \text{and} ~~ \forall w \in \mathbb{R}^n, ~ d^2f(x|0)(w) \geqslant 0 ~~~ \Longrightarrow ~~~ f(x)=0$.
\end{lemma}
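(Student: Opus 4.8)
The plan is to prove the contrapositive: at \emph{every} first-order stationary point $x$ with $f(x)>0$ I will exhibit one explicit direction $w$ along which the second subderivative is strictly negative, so that $x$ is not second-order stationary. The only feature of $d^2f(x|0)$ I will use is that, being a $\liminf$ over $\tau\searrow0$ and $w'\to w$, it is bounded above by the value obtained along the frozen path $w'\equiv w$; hence it suffices to find a single $w$ for which $\tau\mapsto f(x+\tau w)$ has strictly negative curvature at $\tau=0^+$.

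First I would reduce to the normalized case $u_i>0$ for all $i$. If some $u_i=0$ then $x_i=0$ by stationarity (exactly as in the proof of Lemma~\ref{lemma:step}), and restricting to directions $w$ supported on $\{i:u_i\neq0\}$ turns the problem into the analogous one for the function on the nonzero coordinates; moreover the diagonal sign change $x_i\mapsto\mathrm{sgn}(u_i)x_i$ is a linear isometry that replaces the data $u$ by $|u|$ and transforms second subderivatives equivariantly. So I may assume $u_i>0$, and then by Lemma~\ref{lemma:stationary} the spurious point $x$ satisfies $\sum_i x_i=0$, $|x_i|\le u_i$, and $x\neq\pm u$ (the latter forcing $f(x)>0$).

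The crux is an exact local formula. Because $x_ix_j\le|x_i||x_j|\le u_iu_j$, writing $|t|=-t+2t^{+}$ with $t^{+}=\max\{t,0\}$ gives, for $y=x+\tau w$,
\[
f(y)=\tfrac12\big[(\textstyle\sum_i u_i)^2-(\textstyle\sum_i y_i)^2\big]+\sum_{i,j}(y_iy_j-u_iu_j)^{+}.
\]
All pairs with $x_ix_j<u_iu_j$ keep $y_iy_j<u_iu_j$ for small $\tau$, so only the \emph{active} pairs (those with $x_ix_j=u_iu_j$) contribute to the last sum, while $\sum_i y_i=\tau\sum_i w_i$ since $\sum_i x_i=0$. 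The active pairs are precisely those inside $P:=\{i:x_i=u_i\}$ or inside $N:=\{i:x_i=-u_i\}$, on which the linear-in-$\tau$ coefficient is $L_{ij}=\pm(u_iw_j+u_jw_i)$. Requiring the order-$\tau$ coefficient of $f(x+\tau w)-f(x)$, namely the sum of $L_{ij}$ over active pairs with $L_{ij}>0$, to vanish identifies the critical cone as $\{w:w_i\le0\ (i\in P),\ w_i\ge0\ (i\in N)\}$. On this cone, an active pair with $L_{ij}=0$ must have $w_i=w_j=0$ (a sum of two same-signed terms vanishes only if both do), so $(y_iy_j-u_iu_j)^{+}=0$ there as well. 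Hence for $w$ in the critical cone and small $\tau>0$,
\[
f(x+\tau w)-f(x)=-\tfrac12\tau^2\Big(\sum_i w_i\Big)^2 .
\]

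It then remains to pick a critical direction with $\sum_i w_i\neq0$: with $I:=\{i:|x_i|<u_i\}$, take $w=e_i$ for some $i\in I$ if $I\neq\emptyset$, and otherwise $w=-e_i$ for some $i\in P$ (or $w=e_i$ for some $i\in N$); one of these is always available since $P\cup N\cup I=\{1,\dots,n\}$. For such $w$ the displayed identity gives $d^2f(x|0)(w)\le-(\sum_i w_i)^2<0$, contradicting second-order stationarity, so $f(x)=0$. I expect the main obstacle to be the crux formula — in particular verifying that the positive-part correction terms genuinely vanish on the critical cone (that $L_{ij}=0$ on an active pair forces $w_iw_j=0$), together with the bookkeeping of the two reductions to $u_i>0$; once the identity $f(x+\tau w)-f(x)=-\tfrac12\tau^2(\sum_i w_i)^2$ is established, the conclusion is immediate.
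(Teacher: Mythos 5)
Your proof is correct, and it takes a genuinely different route from the paper's. Both arguments launch from Lemma~\ref{lemma:stationary} (so $\langle \mathrm{sign}(u),x\rangle=0$, $|x_i|\leqslant|u_i|$, hence $x_ix_j\leqslant u_iu_j$ whenever $u_iu_j\neq0$) and both use only the frozen-path bound $d^2f(x|0)(w)\leqslant\liminf_{\tau\searrow0}\,[f(x+\tau w)-f(x)]/(\tfrac12\tau^2)$; the difference is in how the exact negative-quadratic decrease is produced. The paper picks the single direction $w=\theta u-x$ toward a global minimizer, checks by a short case analysis that the inequalities $(x_i+tw_i)(x_j+tw_j)\leqslant u_iu_j$ persist for small $t$, and telescopes the sum into $f(x+tw)=(1-t^2)f(x)$, yielding the quantitative bound $d^2f(x|0)(w)\leqslant-2f(x)$ without any normalization of $u$ (zero coordinates and signs are absorbed directly into the computation \eqref{last_brutal_b}--\eqref{last_brutal_i}). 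You instead, after the (harmless but necessary to justify) reductions to $u_i>0$, split $f$ globally into the concave quadratic $\tfrac12[(\sum_iu_i)^2-(\sum_iy_i)^2]$ plus the corrections $\sum_{i,j}(y_iy_j-u_iu_j)^+$, note that only pairs inside $P$ or inside $N$ can activate near $x$, and verify that on the cone $\{w: w_i\leqslant0\ (i\in P),\ w_i\geqslant0\ (i\in N)\}$ all corrections vanish for small $\tau$ --- your observation that $L_{ij}=0$ forces $w_i=w_j=0$ is exactly the point that makes this airtight --- giving $f(x+\tau w)-f(x)=-\tfrac12\tau^2(\sum_iw_i)^2$, after which a coordinate direction $\pm e_i$ suffices. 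Your version buys a description of an entire cone of exact-descent directions and the simplest possible witness; the paper's buys a cleaner statement with no case split on $P$, $N$, $I$ and a bound tied to the optimality gap. The two are consistent: $\theta u-x$ lies in your cone, and there $(\sum_iw_i)^2=(\sum_iu_i)^2=2f(x)$, so your identity specializes to the paper's $(1-\tau^2)f(x)$.
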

\begin{proof}
Let $x\in \mathbb{R}^n$ be such that $0 \in \partial f(x)$ and $d^2f(x|0)(w) \geqslant 0$ for all $w\in \mathbb{R}^n$. Since $|x_i| \leqslant |u_i|$ for all $i$, we have $x_ix_j/(u_iu_j) \leqslant 1$ for all $u_iu_j\neq 0$. Let $\theta \in \{\pm 1\}$ and $w := \theta u -x$. When taking a small step $t>0$ in the direction $w$, the inequalities remain valid:
\begin{equation}
\label{eq:ratio}
    \frac{(x_i+t w_i)(x_j+ t w_j)}{u_iu_j} \leqslant 1,~~~\text{if}~~u_iu_j \neq 0.
\end{equation}
Indeed, if $x_ix_j/(u_iu_j) < 1$, then \eqref{eq:ratio} holds by continuity for $t$ small enough. If $x_ix_j/(u_iu_j) = 1$, then $|x_i/u_i|\hspace*{.3mm}|x_j/u_j| = 1$. Since $|x_i/u_i| \leqslant 1$ and $|x_j/u_j| \leqslant 1$, both inequalities must be equalities. Thus $x_i/u_i = x_j/u_j = \pm \theta$. Recall that $w_i = \theta u_i - x_i$ for all $i$. In the case of $+\theta$, we find that $w_i = w_j = 0$ so \eqref{eq:ratio} holds. In the case of $-\theta$, $x_i + t w_i = -\theta u_i + 2t \theta u_i = \theta u_i(2t -1)$ and $x_j + t w_j = \theta u_j(2t -1)$, so that $(x_i + t w_i)(x_j + t w_j)/(u_iu_j) = (2t-1)^2 <1$. It follows that
\begin{subequations}
\label{eq:brutal}
	\begin{align}
	\label{last_brutal_a} f(x+tw) = & \frac{1}{2} \sum\limits_{i,j = 1}^n |(x_i+t w_i) (x_j+t w_j) - u_i u_j| ~~~~~~~~~~~~~~~~~~~\\
	\label{last_brutal_b} = & \frac{1}{2} \sum\limits_{u_iu_j \neq 0} |(x_i+t w_i) (x_j+t w_j) - u_i u_j|\\
	\label{last_brutal_c} = & \frac{1}{2} \sum\limits_{u_iu_j \neq 0} |u_i u_j|\left|\frac{(x_i+t w_i) (x_j+t w_j)}{u_iu_j} - 1\right|\\
	\label{last_brutal_d} = & \frac{1}{2} \sum\limits_{u_iu_j \neq 0} |u_i u_j|\left(1 - \frac{(x_i+t w_i) (x_j+t w_j)}{u_iu_j}\right)\\
	\label{last_brutal_e} = & \frac{1}{2} \sum\limits_{u_iu_j \neq 0} |u_i u_j| - \frac{1}{2} \sum\limits_{u_iu_j \neq 0} \mathrm{sign}(u_iu_j)(x_i+t w_i) (x_j+t w_j)\\
	\label{last_brutal_f} = & \frac{1}{2} \sum\limits_{u_iu_j \neq 0} |u_i u_j| - \frac{1}{2} \sum\limits_{u_i \neq 0} \mathrm{sign}(u_i)(x_i+t w_i)\sum\limits_{u_j \neq 0}  \mathrm{sign}(u_j)(x_j+t w_j) \\
	\label{last_brutal_g} = & \frac{1}{2} \sum\limits_{u_iu_j \neq 0} |u_i u_j| - \frac{1}{2} \left((1-t)\sum\limits_{u_i \neq 0} \mathrm{sign}(u_i)x_i+t\sum\limits_{u_i \neq 0}\mathrm{sign}(u_i)u_i \theta \right) \times \\ \label{last_brutal_h} & 
	\left((1-t)\sum\limits_{u_j \neq 0}  \mathrm{sign}(u_j)x_j+t\sum\limits_{u_j \neq 0}\mathrm{sign}(u_j)u_j\theta \right) \\
	\label{last_brutal_i} = & \frac{1}{2} \sum\limits_{u_iu_j \neq 0} |u_i u_j| -\frac{ t^2}{2} \sum\limits_{u_i \neq 0} \mathrm{sign}(u_i)u_i  \sum\limits_{u_j \neq 0} \mathrm{sign}(u_j)u_j \\
	\label{last_brutal_j} = & (1-t^2)f(x).
	\end{align}
\end{subequations}
Above, \eqref{last_brutal_b} holds because, according to \eqref{eq:partial_sub}, $x_i=0$ when $u_i=0$, and so $x_i+t w_i = x_i + t (\theta u_i - x_i) = 0$ when $u_i=0$. \eqref{last_brutal_c} is obtained by factorizing each term in the sum by $|u_i u_j|$. \eqref{eq:ratio} implies that the term inside the absolute value is nonpositive, hence \eqref{last_brutal_d}. \eqref{last_brutal_e} is the result of expanding the product inside the sum and the fact that $\mathrm{sign}(a) = |a|/a$ when $a \neq 0$. \eqref{last_brutal_f} is obtained by factorizing the second term in \eqref{last_brutal_e}. \eqref{last_brutal_g}-\eqref{last_brutal_h} uses that $w_i = \theta u_i - x_i$. \eqref{last_brutal_i} is due Lemma \ref{lemma:stationary}, which results in two terms cancelling out. \eqref{last_brutal_j} holds because we are computing $f(x+tw)$, so that evaluated at $t=0$, it must be equal to $f(x)$. It follows from {\protect\eqref{eq:brutal}} that $d^2f(x|0)(w) \leqslant - 2f(x)$. Thus $f(x) \leqslant  -d^2f(x|0)(w)/2 \leqslant 0$. This completes the proof as $f$ is nonnegative.
\end{proof}

\noindent\textbf{Acknowledgements} We thank the reviewer and the editors for their valuable feedback.

\bibliographystyle{abbrv}    
\bibliography{references}

\begin{thebibliography}{1}

\bibitem{charisopoulos2021low}
V.~Charisopoulos, Y.~Chen, D.~Davis, M.~D{\'\i}az, L.~Ding, and D.~Drusvyatskiy.
\newblock Low-rank matrix recovery with composite optimization: good conditioning and rapid convergence.
\newblock {\em Foundations of Computational Mathematics}, pages 1--89, 2021.

\bibitem{davis2022proximal}
D.~Davis and D.~Drusvyatskiy.
\newblock Proximal methods avoid active strict saddles of weakly convex functions.
\newblock {\em Foundations of Computational Mathematics}, 22(2):561--606, 2022.

\bibitem{guan2024ell_1}
J.~Guan and A.~M.-C. So.
\newblock $\ell_1$-norm rank-one symmetric matrix factorization has no spurious second-order stationary points.
\newblock {\em arXiv preprint arXiv:2410.05025}, 2024.

\bibitem{josz2022nonsmooth}
C.~Josz and L.~Lai.
\newblock Nonsmooth rank-one matrix factorization landscape.
\newblock {\em Optimization Letters}, 16(6):1611--1631, 2022.

\bibitem{joszneurips2018}
C.~Josz, Y.~Ouyang, R.~Y. Zhang, J.~Lavaei, and S.~Sojoudi.
\newblock {A theory on the absence of spurious solutions for nonconvex and nonsmooth optimization}.
\newblock {\em NeurIPS}, Dec. 2018.

\bibitem{rockafellar2009variational}
R.~T. Rockafellar and R.~J.-B. Wets.
\newblock {\em Variational analysis}, volume 317.
\newblock {Springer Berlin}, Heidelberg, 2009.

\end{thebibliography}

\end{document}